\title{\textbf{Phase transition in the exit boundary problem for random walks on groups}\thanks{Supported by the RNF grant 14-11-00581.}}
\author{A.\,M.~VERSHIK%
\thanks{e-mail: avershik@gmail.com}}
\affil{\small
St.~Petersburg Department of Steklov Institute of Mathematics,\\
St.~Petersburg State University,\\
Institute for Information Transmission Problems, Moscow}
\author{A.\,V.~MALYUTIN%
\thanks{e-mail: malyutin@pdmi.ras.ru}}
\affil{\small St.~Petersburg Department of Steklov Institute of Mathematics}
\date{}
\newcounter{numsec}[section]
\newtheoremstyle{Mystyle}
     {\topsep}
     {\topsep}
     {\it}
     {}
     {\bfseries}
     { }
     { }
     {\thmnumber{#2.~}\thmname{#1}\thmnote{ #3}.}
\theoremstyle{Mystyle}
\newtheorem{theorem}[numsec]{Theorem}
\newtheorem{proposition}[numsec]{Proposition}
\newtheorem{lemma}[numsec]{Lemma}
\newcommand \N      {\mathbb N}
\newcommand \Z      {\mathbb Z}
\newcommand \R      {\mathbb R}
\newcommand \ssm    {\smallsetminus}
\renewcommand \ge{\geqslant}
\newcommand \Erg     {\operatorname{Erg}}
\newcommand \Inv     {\operatorname{Inv}}
\newcommand \Paths   {\operatorname{\mathscr{P}}}
\newcommand \hor     {h}
\newcommand \Damma   {\operatorname{D}}
\begin{document}

\maketitle

\rightline{To the memory of E.~B.~Dynkin}
\begin{abstract}
We describe the full exit boundary of random walks on homogeneous trees, in particular, on the free groups. This model exhibits a phase transition, namely, the family of Markov measures under study loses ergodicity as a parameter of the random walk changes.

The problem under consideration is a special case of the problem of describing the invariant (central) measures on branching graphs, which covers a number of problems in combinatorics, representation theory, probability and was fully stated in a series of recent papers by the first author \cite{V1,V2,V3}. On the other hand, in the context of the theory of Markov processes, close problems were discussed as early as 1960s by E.~B.~Dynkin.\end{abstract}

\section{Introduction}
In the late 1960s,  E.~B.~Dynkin  \cite{D69,D70,D71} developed the concept of exit (respectively, entrance) boundaries\footnote{In the terminology of those papers, the  spaces of exits (entrances).} for Markov chains with given cotransition (respectively, transition) probabilities. He started essentially from Martin's theory and its probabilistic interpretation suggested by J.~Doob, G.~Hunt, and others. In the book
\cite{DyYU}, a number of problems of this theory is presented. For instance, the Martin boundary, i.\,e., the set of minimal harmonic functions, is described for the natural Laplace operator on the free group. In the book~\cite{Gui}, numerous results on various boundaries of classical symmetric spaces are collected.

 The link between these problems and the problem of finding the invariant (central) measures on the path spaces of branching graphs (i.\,e., locally finite countable connected  $\Bbb N$-graded graphs, or Bratteli diagrams) has been repeatedly emphasized in a number of papers by the first author and S.~V.~Kerov. In the recent paper
\cite{V3},  it is clearly stated that  {\it the problem of finding the exit boundary, i.\,e., the set of all Markov measures with given cotransition probabilities, has the same scope
 as the problem of describing the set of measures with a given cocycle (in the case of central measures, the cocycle is identically equal to one) for a hyperfinite equivalence relation on a Cantor-like set.} As emphasized in \cite{V3}, the most transparent and geometric context for these problems is the theory of projective limits of simplices. On the other hand, the most important facts discovered recently (\cite{V3}) are related to the properties of the tail filtrations of Markov processes (or the path spaces of branching graphs), such as standardness and compactness in the intrinsic metric.

In this paper we consider a special problem, namely, that of finding the exit boundary for simple random walks on trees, or, equivalently, the problem of describing the central measures for branching graphs related to these random walks. In subsequent papers we will consider a wider class of groups and branching graphs.

Now we describe the construction of \emph{dynamic graphs} in terms of which we study the problem. Let $T$ be an arbitrary connected locally finite graph without multiple edges with a distinguished vertex~$v_0$. The
\emph{dynamic graph} $\Damma (T,v_0)$ is the  $\N$-graded graph whose $n$th level is a copy of the set of vertices of $T$ connected with the distinguished vertex $v_0$ by walks
of length~$n$ (or, which is the same, by paths of length at most~$n$ and of the same parity as~$n$); the unique vertex of the zero level will be denoted by~$\varnothing$. Two vertices in $\Damma (T,v_0)$ are adjacent if and only if they lie at adjacent levels and are connected by an edge in the graph $T$. If a connected graph $T$ has no odd cycles, then choosing the initial vertex turns it into an $\N$-graded graph for which the dynamic graph $\Damma (T,v_0)$ is defined in~\cite{VN} and called the  \emph{pascalization} of $T$.\footnote{The term is due to the fact that if $T$ is the chain (i.\,e.,  $T=\{n\in \Bbb Z\}$ with the grading
 $n\rightarrow |n|\in \Bbb N$), then the ``pascalization'' of $T$ is the Pascal graph (the infinite Pascal triangle). As shown in~\cite{VN}, the branching graph of the infinite-dimensional Brauer algebra is the pascalization of the Young graph; for another example of pascalization, see~\cite{GK}.}

If $G$ is a countable group with a fixed finite collection of generators $A=A^{-1}$, then by the dynamic graph $\Damma(G,A)$ of the pair $(G,A)$ we mean the dynamic graph $\Damma(T(G,A))$, where $T(G,A)$ is the Cayley graph of the pair $(G,A)$; this graph will be called the \emph{dynamic Cayley graph}.\footnote{Note that the choice of the initial vertex does not affect the structure of the dynamic graph, since the group $G$ act transitively on the set of vertices of $T(G,A)$.}

In this paper $T$ will always be the homogeneous tree $T_{q+1}$, $q>0$, of valency~$q+1$. In particular, $T_2$
is the Cayley graph of the group  $\Bbb Z$, and $T_{2k}$ is the Cayley graph of the free group with $k$ generators for the set of standard generators and their inverses. The graph  $\Damma (T)$ will be called the \emph{dynamic graph of the tree $T$}, or the \emph{pascalization of the tree~$T$}.

Now recall the general definition of central and ergodic measures on the path space of an arbitrary branching graph
$\Damma$ (see \cite{V3}). Consider the Cantor-like set $\Paths(\Damma)$ of all infinite paths in the graph~$\Damma$. It is compact in the weak topology. We will consider Borel probability measures on this space. A measure~$\mu$ on~$\Paths(\Damma)$ is called \emph{central} if it has the following property: for every vertex $v$ of
 $\Damma$, the conditional measure on the set of finite paths connecting $\varnothing$  with $v$ is uniform. One can easily see that every central measure is Markov with respect to the grading, i.\,e., it is the law of a Markov chain for which the numbers of levels are time moments varying from
  $0$ to $+\infty$.\footnote{In the theory of dynamical systems, Markov measures are measures of maximal entropy on  Markov compacta.} It is easy to verify that all central measures have the same system of \emph{cotransition probabilities}, i.\,e., the cotransition (i.\,e., inverse) probability of going from an arbitrary vertex $u$ of the $n$th level to a fixed vertex $v$ of the $(n+1)$th level is proportional to the number of paths leading from $\varnothing$ to~$u$.

The central measures form a convex compact set, which is a compact simplex (see \cite{V3}) in the compact space of all measures on the path space. A measure on the path space is called \emph{ergodic}
(or \emph{regular}) if it is an extreme point in this simplex. Another definition of ergodicity involves the {\it tail filtration}.
 The latter is the natural filtration of $\sigma$-algebras  $\{{\frak A}_n\}_{n \in \Bbb N}$, where ${\frak A}_n$ is the $\sigma$-algebra of sets described in terms of conditions on the coordinates (vertices) of paths with numbers $>n$. A central measure $\mu$ is erdodic if the intersection over $n$ of the sequence of these $\sigma$-algebras, regarded as $\sigma$-algebras of $\mu$-measurable sets, is the trivial $\sigma$-algebra consisting of sets of zero or full measure.

Now we state our main problem in application to the dynamic graph of a homogeneous tree. Of course, the statement of the problem makes sense for an arbitrary branching graph.

\textbf{Describe the set $\Erg(\Damma(T_{q+1}))$ of all ergodic central measures on the compact space $\Paths(\Damma(T_{q+1}))$ of infinite paths in the graph $\Damma(T_{q+1})$, or, in other terms, describe
 all ergodic Markov measures of maximal entropy on  $\Paths(\Damma(T_{q+1}))$.}

In this situation, we call  $\Erg(\Damma(T_{q+1})$ the \emph{exit boundary}. Since we deal with Markov measures,  to ``describe'' here means to find \emph{the transition probabilities of the corresponding Markov chains}. This is what will be done.

Observe the difference between this problem and that of finding the Poisson--Furstenberg boundary for simple random walks on groups. In the latter problem, one has a fixed Markov chain corresponding to a simple random walk (i.\,e., a measure on the path space of a dynamic graph) and must find the exit boundary of the {\it given Markov chain} (see \cite{KV}). In other words, one must represent this Markov measure as the integral of  ergodic central measures over some (harmonic) measure. The space of ergodic central measures equipped with this harmonic measure is exactly the Poisson (or Poisson--Furstenberg  \cite{V2000}) boundary. It is a measure space. But in our case we want to find all ergodic central Markov measures. The answer is a topological space without any distinguished measure. The difference is seen already for the group
 $\Bbb Z$: here the Poisson boundary is trivial, while the ``full'' exit boundary is an interval.

In this connection, it makes sense to consider this simplest special case of our problem, namely, the case of the tree
$T_2$ (chain), since we will need it in what follows. As mentioned above,  $\Damma (T_2)$ is the Pascal graph, and we arrive at the problem of describing the central measures on the Pascal graph. Note that the path space of the Pascal graph is the space of all  $0-1$ sequences, and a measure on the path space is central if and only if it is invariant under the group of finite permutations. Hence, applying an appropriate version of de Finetti's theorem, we see that an ergodic measure is a Bernoulli measure, with probability $p$ of $0$ and probability $1-p$ of $1$. Thus the exit boundary in this case is the interval
 $[0,1]$; its extreme points $\{0\}$ and $\{1\}$ correspond to the Dirac measures on the two extreme rays of the Pascal triangle. It is more proper to regard this boundary as the cone over the two extreme points. Note that here ergodicity means not Kolmogorov's  $0-1$ law, but the so-called Hewitt--Savage law. As noted above, the Poisson boundary in this example is trivial (consists of a single point), and the exit boundary is the interval $[0,1]$. In this sense, the main result of the paper, a description of the exit boundary for an arbitrary homogeneous tree, can be regarded as another (``noncommutative'') generalization of de Finetti's theorem. The exit boundary problem for the dynamic Cayley graph of an arbitrary pair $(G,A)$ where $G$ is a group and $A$
is a set of its generators and their inverses (see above) can be regarded as an analog of de Finetti's problem on symmetric measures. Apparently, in all group-theoretic examples the tail filtration is standard in the sense of
 \cite{V3} and the compactness with respect to the intrinsic metric holds;
 in the case of the graph~$\Damma (T)$, where $T$ is a tree, considered here, this follows from the main theorem on the exit boundary.

The most important consequence of our analysis is the existence of a phase transition; it can be described in two equivalent ways: 1) the phase transition is due to the fact that the central measure loses ergodicity as the drift rate changes, 2) the phase transition is due to the fact that an eigenfunction of the Laplace operator loses minimality under a deformation  preserving the eigenvalue. Note that the theory of eigenfunctions of the Laplace operator on trees, on Riemannian surfaces of constant negative curvature, on hyperbolic groups is the subject of many papers (those closest to our topic are \cite{F-TN, Helg}). In these cases, one usually proves the Poisson formula which allows one to represent eigenfunctions by integrals over minimal eigenfunctions. However, we are interested not only in the linear theory of eigenfunctions, but also in their relation to the order: it is the consideration of positive eigenfunctions and their interpretation as Markov (central) measures that reveals the phase transition phenomenon. A question of great interest is for which graphs, groups (including Lie groups), or homogeneous spaces this effect takes place. One may conjecture that this is the case for some symmetric spaces of semisimple Lie groups.

\section{Statement of the main result and some comments}
\label{sec:1}

\paragraph{Definition. The family of measures $\lambda_{\omega,r}$.}
Let $q\ge 2$ be a positive integer, $T_{q+1}$ be the $(q+1)$-homogeneous tree, and $\Damma(T_{q+1})$ be the dynamic graph (see the definition above). The natural projection $\Damma(T_{q+1})\to T_{q+1}$ will be denoted by $\pi$. The space of ends of the tree $T_{q+1}$ will be denoted by $\partial T_{q+1}$; for $q\ge 2$, this is a Cantor-like set. If
$v$ and $w$ are adjacent vertices in $\Damma(T_{q+1})$, with $v$ belonging to a higher level  than $w$, we say that \emph{$w$ immediately succeeds $v$} and write  $v\prec w$.
Let $\omega\in\partial T_{q+1}$. An edge $(v,w)$  (with $v\prec w$) of the graph $\Damma(T_{q+1})$ is called \emph{$\omega$-directed} if the projection $\pi(w)$ lies between\footnote{Every vertex of a tree is connected by a unique geodesic path with every end $\omega$, hence for every vertex~$x\in T_{q+1}$ and every end~$\omega\in\partial T_{q+1}$ there is a unique vertex adjacent to $x$ that lies between $x$ and~$\omega$.} $\pi(v)$ and~$\omega$.

Given an end $\omega\in\partial T_{q+1}$ and a number $r\in[0,1]$, denote by~$\lambda_{\omega,r}$ the Markov measure on the space $\Paths(\Damma(T_{q+1}))$ of all infinite paths in  $\Damma(T_{q+1})$ for which the transition probabilities coincide and are equal to $r$ on all $\omega$-directed edges, and also coincide (and are equal, correspondingly, to
 $\frac{1-r}{q}$) on all the other edges.

Obviously, all measures~$\lambda_{\omega,r}$ are central.

\begin{theorem}
\label{thm:exit}
For $q\ge2$, the set $\Erg (\Damma(T_{q+1}))$ of all ergodic central measures on the space $\Paths(\Damma(T_{q+1}))$ of infinite paths in the dynamic graph $\Damma(T_{q+1})$ over the $(q+1)$-homogenelous tree~$T_{q+1}$ (i.\,e., the exit boundary) coincides with the following family of Markov measures:
\begin{equation*}
\label{eq:family}
\Lambda_q:=\left\{\lambda_{\omega,r} \mid \omega\in\partial T_{q+1}, r\in\left[1/2,1\right]\right\}.
\end{equation*}
Thus the exit boundary is homeomorphic (in the weak topology) to the product
$$
\partial T_{q+1} \times \left[1/2,1\right].
$$
\end{theorem}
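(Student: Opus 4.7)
The plan is to encode central measures as positive space–time harmonic functions on $\Damma(T_{q+1})$, recognize $\lambda_{\omega,r}$ as the separable eigenfunction solutions of the corresponding recursion, and isolate the ergodic ones by analyzing convergence of the projected walk $X_n:=\pi(V_n)$ on $T_{q+1}$ to an end. Throughout, let $b_\omega\colon T_{q+1}\to\mathbb Z$ denote the horocycle (Busemann function) from $\omega$, normalized by $b_\omega(v_0)=0$.

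First I would encode a central measure $\mu$ by $h_n(v):=\mu(\{\text{a fixed path from }\varnothing\text{ to }v\})$; centrality and the Markov property give the harmonic recursion $h_{n-1}(u)=\sum_{w\sim_T u}h_n(w)$ (the sum running over the $T$-neighbors of $u$, which are its successors in $\Damma$) with $h_0(\varnothing)=1$. A direct computation identifies $\lambda_{\omega,r}$ with the separable solution $h_n(v)=\lambda^{-n}t^{b_\omega(v)}$, where $t=\sqrt{(1-r)/(qr)}$ and $\lambda=1/t+qt$; here $g(v)=t^{b_\omega(v)}$ is a positive eigenfunction of the adjacency operator of $T_{q+1}$ with eigenvalue $\lambda$. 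Under this parametrization $r\in[1/2,1]$ corresponds to $t\in(0,1/\sqrt q]$, i.e., $\lambda\ge 2\sqrt q$ --- the upper edge of the $\ell^2$-spectrum of the random walk operator, already locating the phase transition.

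For the ergodicity of $\lambda_{\omega,r}$ with $r\in[1/2,1]$, I would show triviality of the tail $\sigma$-algebra by proving $X_n\to\omega$ almost surely, which makes the limit end a degenerate tail random variable. The horocycle chain $b_\omega(X_n)$ has i.i.d.\ $\pm1$ increments with probabilities $r$ and $1-r$; so for $r>1/2$, $b_\omega(X_n)\to-\infty$ by the strong law and $X_n\to\omega$ directly. The critical case $r=1/2$ is subtler because $b_\omega(X_n)$ is then a recurrent symmetric random walk on $\mathbb Z$: here I would first show that $X_n$ is still transient on $T_{q+1}$ for $q\ge 2$ by constructing a unit flow along the $v_0\to\omega$ geodesic whose edge resistances form a geometric series with finite sum (hence finite Dirichlet energy), and then rule out convergence to any $\omega'\ne\omega$ on the grounds that such convergence would force $b_\omega(X_n)\to+\infty$, contradicting recurrence; the only remaining possibility is $X_n\to\omega$. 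Conversely, for $r<1/2$ the drift $1-2r>0$ gives $b_\omega(X_n)\to+\infty$ and hence $X_n\to\omega'$ for a random $\omega'\ne\omega$, whose law is non-degenerate by invariance of $\lambda_{\omega,r}$ under the stabilizer of $(\omega,v_0)$; so $\lambda_{\omega,r}$ is not ergodic, decomposing as an integral of measures of the form $\lambda_{\omega',r'}$ with $r'\in(1/2,1]$, identified by an $h$-transform on the walk conditioned to exit at $\omega'$.

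For completeness --- that every ergodic central measure lies in $\Lambda_q$ --- let $\mu$ be ergodic central with harmonic function $h$. Transience (automatic for $q\ge 2$) together with ergodicity forces $\omega_\infty:=\lim_n X_n$ to be $\mu$-a.s.\ a constant end $\omega$. Centrality combined with the horospherical homogeneity of $T_{q+1}$ with respect to $\omega$ should then force $h_n(v)=H(n,b_\omega(v))$ for some $H$, reducing the harmonic equation to $H(n-1,k)=H(n,k-1)+qH(n,k+1)$, whose positive separable solutions $H(n,k)=\lambda^{-n}t^k$ force $\lambda=1/t+qt$, so $\mu=\lambda_{\omega,r}$ for a unique $r$, necessarily $\ge 1/2$. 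The main obstacle is this rigidity step --- proving a priori that the harmonic function of an arbitrary ergodic central measure is horospherically homogeneous rather than an integral of such; this is the heart of the proof and I expect it to require a 0--1 argument along horospheres combined with the large automorphism group of $T_{q+1}$. Granted the parametrization, the homeomorphism $\Erg(\Damma(T_{q+1}))\cong\partial T_{q+1}\times[1/2,1]$ in the weak topology is immediate from the continuity and injectivity of $(\omega,r)\mapsto\lambda_{\omega,r}$, both readable off the transition formulae.
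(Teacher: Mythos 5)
Your skeleton is sound and in fact close to the paper's (limit end, horospherical coordinate, eigenfunctions of the adjacency operator, transition at the spectral edge $2\sqrt q$), but the two steps you leave open are exactly the two places where all the work lies, and neither closes the way you suggest. The rigidity step --- that the harmonic function of an ergodic central measure satisfies $h_n(v)=H(n,b_\omega(v))$ --- is the heart of the inclusion $\Erg(\Damma(T_{q+1}))\subset\{\lambda_{\omega,r}\}$, and a ``0--1 argument along horospheres combined with the large automorphism group'' has no obvious foothold: an arbitrary ergodic central $\nu$ is not a priori invariant under the stabilizer of $(v_0,\omega)$, and that stabilizer does not act transitively on horospheres anyway. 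The paper proves this step differently: by reverse martingale convergence (Lemma~\ref{lem:ergo-0}) the transition probabilities of an ergodic $\nu$ are a.s.\ limits $p_\nu(v,x)=\lim_i\dim(x,w_i)/\dim(v,w_i)$ along a typical path; since $\dim(x,y)$ in $\Damma(T_{q+1})$ depends only on the level difference and on $d(\pi(x),\pi(y))$, one matches these distances for adjacent vertices $a\prec b$ at the infinitely many times the typical path steps toward (resp.\ away from) $\omega$, nondegeneracy supplying infinitely many steps of each kind; the degenerate case is handled separately and gives $\lambda_{\omega,1}$ (a case your ratio formalism does not cover). Even granting $h_n(v)=H(n,b_\omega(v))$, you would still need an extremality (de Finetti/Pascal-graph) argument to force the \emph{separable} solutions of $H(n-1,k)=H(n,k-1)+qH(n,k+1)$; you assert this rather than prove it.

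The second gap is a genuine non sequitur: almost sure convergence $X_n\to\omega$ makes the limit end a degenerate tail variable but does not trivialize the tail $\sigma$-algebra. The measure $\tfrac12\left(\lambda_{\omega,0.6}+\lambda_{\omega,0.9}\right)$ is central, its paths converge to $\omega$ a.s., and it is not ergodic --- the escape rate is another tail invariant. The paper avoids proving tail triviality directly: having already shown that every ergodic central measure lies in the family $\{\lambda_{\omega,r}\}$ and that $r<1/2$ is not ergodic, it proves (Lemma~\ref{lem:lambdawr}) that $\lambda_{\omega,r}$-a.e.\ path converges to $\omega$ with exact linear rate $2r-1$, so the members of $\Lambda_q$ are mutually singular via the pair (limit end, rate), and hence no $\lambda_{\omega_0,r_0}\in\Lambda_q$ can be a nontrivial mixture of them. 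To repair your argument you must adjoin the rate-of-escape statistic (or invoke the Kaimanovich--Vershik entropy criterion, as the paper notes in a footnote). Your non-ergodicity proof for $r<1/2$ and the concluding homeomorphism statement do match the paper and are fine.
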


\paragraph{Remarks.}
1. Homogeneity. The theorem implies that all ergodic central Markov measures on $\Paths(\Damma(T_{q+1}))$ are time-homogeneous (in the sense that the corresponding transition probabilities depend only on a vertex of the tree, but not on the moment when this vertex is being visited), since they belong to the described family, whose all members have this property. For arbitrary central measures, homogeneity does not hold in general.

2. The parameter $r$ has a simple expression in terms of the rate $\tau$ of approaching the end  $\omega\in\partial T_{q+1}$: $\tau=2r-1$. In this case $|\tau|$ coincides with the limit of the ratio $\frac{d(\pi(\varnothing),\pi(w_n))}{n}$ (which is the drift rate, i.\,e., the velocity with which $\pi(w_n)$ goes to infinity). For
$r>\frac{1}{2}$, the number $\tau$ can also be described as the \emph{stabilization rate},  i.\,e., the limit of the ratio $\frac{s(n)}{n}$, where $s(n)$ is the greatest number of a vertex on the ray  $[\pi(\varnothing),\omega\rangle$ among all the vertices of this ray that will not occur again on the given random walk (see Lemma~\ref{lem:limitpoint}).
The value $r=1$ corresponds to the Markov measure that is the Dirac measure on the geodesic, the deterministic path leading from the initial point to the boundary point~$\omega$.

3. According to the theorem, the Markov measures corresponding to $r \in [0,1/2)$ are central, but no longer ergodic; thus at
 $\frac12$ we have a {\it phase transition, loss of ergodicity type}, see Section~\ref{sec:PHT} below. The decomposition of these nonergodic central measures into ergodic components is given in the same section.

4. The Poisson--Furstenberg boundary is contained in the family  $\Lambda_q$ as the subset $$\{\lambda_{\omega,\frac{q}{q+1}} : ~\omega\in\partial T_{q+1} \}.$$

5. It is of interest to consider adic transformations of paths (similar to the Pascal automorphism) and the tail filtrations corresponding to the Markov measures $\lambda_{\omega,r}$. It follows from our results that these filtrations are standard in the sense of \cite{V3}.

\section{The scheme of the proof and the first lemmas}

The proof of our main result, Theorem~\ref{thm:exit}, can obviously be reduced to proving the following two assertions.
\begin{itemize}
\item[I.] Every ergodic central measure on the  space $\Paths(\Damma(T_{q+1}))$ has the form $\lambda_{\omega,r}$ (Proposition~\ref{pro:main-2}).
\item[II.] A measure of the form $\lambda_{\omega,r}$ is ergodic if and only if $r\in[1/2,1]$ (Proposition~\ref{pro:main-3}).
\end{itemize}

The proof of the first one essentially relies on the almost everywhere convergence of martingales (in other words, the pointwise ergodic theorem) applied to the tail filtration on the path space of the graph.
\begin{lemma}
\label{lem:ergo-0}
Let $\nu$ be an ergodic central measure on the space $\Paths(\Damma)$ of infinite paths of a dynamic graph
$\Damma$, and $p_\nu$ be the corresponding system of transition probabilities on the edges of~$\Damma$. Then

{\rm(i)} For every finite path $R=(\varnothing=v_0, \dots, v_k)$ in $\Damma$ and for $\nu$-almost every ($\nu$-a.\,e.)\  infinite path $(w_i)_{i\in\N_0}$ in~$\Damma$, the sequence\footnote{We use the standard notation of the theory of branching graphs: the number of paths leading from a vertex $x$ to a vertex $y$ is denoted by $\dim(x,y)$ (and $\dim(\varnothing,y)$ is denoted by $\dim(y)$). It is motivated by  the algebraic interpretation of Bratteli diagrams.}
 $\dim(v_k,w_i)/\dim(w_i)$, $i\in\N_0$, has a limit, and
$$
\lim_{i\to\infty}\frac{\dim(v_k,w_i)}{\dim(w_i)}=p_\nu(v_0,v_1)\cdots p_\nu(v_{k-1},v_k).
$$

{\rm (ii)} If $(v,x)$ is an edge in $\Damma$, then for $\nu$-a.\,e.\ path $(w_i)_{i\in\N_0}$ in~$\Damma$ the sequence $\dim(x,w_i)/\dim(v,w_i)$, $i\in\N_0$, has a limit, and
\begin{equation}
\label{eq:egro-2}
\lim_{i\to\infty}\frac{\dim(x,w_i)}{\dim(v,w_i)}=p_\nu(v,x).
\end{equation}
\end{lemma}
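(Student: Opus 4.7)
The plan is to apply L\'evy's downward (reverse martingale) theorem to the tail filtration $\{\mathfrak{A}_n\}$, using centrality of $\nu$ to identify the conditional probabilities of initial-segment cylinder events with dimension ratios, and then using ergodicity to collapse the intersection $\mathfrak{A}_\infty$ to the trivial $\sigma$-algebra.

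For (i), fix $R = (v_0, \dots, v_k)$ and let $A_R \subset \Paths(\Damma)$ be the cylinder event $\{(w_0, \dots, w_k) = R\}$. Centrality asserts that for every vertex $u$ of $\Damma$ at level $n+1$ the conditional distribution of the initial segment $(w_0, \dots, w_{n+1})$ given $w_{n+1} = u$ is uniform on the $\dim(u)$ finite paths from $\varnothing$ to $u$, and exactly $\dim(v_k, u)$ of these begin with~$R$. Because $\nu$ is Markov (every central measure is), conditioning on the whole future $\mathfrak{A}_n$ is the same as conditioning on $w_{n+1}$, so
$$
\nu(A_R \mid \mathfrak{A}_n) = \frac{\dim(v_k, w_{n+1})}{\dim(w_{n+1})} \quad \text{$\nu$-a.s.}
$$
The family $\mathfrak{A}_0 \supseteq \mathfrak{A}_1 \supseteq \cdots$ is a decreasing filtration with intersection $\mathfrak{A}_\infty$, and L\'evy's downward theorem gives $\nu(A_R \mid \mathfrak{A}_n) \to \nu(A_R \mid \mathfrak{A}_\infty)$ $\nu$-a.s. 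Ergodicity of $\nu$ makes $\mathfrak{A}_\infty$ $\nu$-trivial, so the limit is the constant $\nu(A_R) = p_\nu(v_0, v_1) \cdots p_\nu(v_{k-1}, v_k)$, proving~(i).

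For (ii), given the edge $(v, x)$, pick any path $R = (v_0, \dots, v_{k-1}, v)$ from $\varnothing$ to $v$ in $\Damma$ and let $R' = (v_0, \dots, v_{k-1}, v, x)$. Applying (i) to $R$ and to $R'$ produces the $\nu$-a.s.\ limits
$$
\frac{\dim(v, w_i)}{\dim(w_i)} \to C_R, \qquad \frac{\dim(x, w_i)}{\dim(w_i)} \to C_R \cdot p_\nu(v, x),
$$
with $C_R = \nu(A_R)$. When $R$ can be chosen with $C_R > 0$, dividing yields the claim and forces the denominator sequence to be positive eventually, $\nu$-a.s. If no such $R$ exists then $\nu$-a.e.\ path avoids~$v$, the quantity $p_\nu(v, x)$ is vacuous, and the assertion holds trivially on the complement.

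The main obstacle is the careful derivation of the formula for $\nu(A_R \mid \mathfrak{A}_n)$ from the centrality axiom combined with the Markov property; everything after that is a direct invocation of downward martingale convergence together with the triviality of the tail $\sigma$-algebra for ergodic~$\nu$.
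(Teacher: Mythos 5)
Your proposal is correct and follows exactly the route the paper intends: the paper states Lemma~\ref{lem:ergo-0} without a detailed proof, noting only that it ``relies on the almost everywhere convergence of martingales applied to the tail filtration,'' and your writeup --- identifying $\nu(A_R\mid\mathfrak{A}_n)$ with the dimension ratio via centrality, invoking reverse martingale convergence, and using ergodicity to trivialize $\mathfrak{A}_\infty$ --- is precisely that argument, with the degenerate case in~(ii) handled sensibly.
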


\paragraph{Definition.} A path $(w_i)_{i\in\N_0}$ in the graph $\Paths(\Damma(T_{q+1}))$ will be called \emph{typical} with respect to a given ergodic measure $\nu$ if the conditions of Lemma~\ref{lem:ergo-0} hold for this path for every finite path $R$ and every edge of the graph.
Clearly, the set of typical paths is of full $\nu$-measure.

\paragraph{Definition.}
We say that a sequence $(x_i)_{i\in\N}$ in the tree $T_{q+1}$ \emph{converges} to a point~$\omega\in\partial T_{q+1}$ if for some (and hence every) vertex $v\in T_{q+1}$ the length of the common part
$
[v, x_i]\cap [v,\omega\rangle
$
of the geodesic segment (path) $[v,x_i]$ from $v$ to $x_i$ and the ray $[v,\omega\rangle$ from $v$ to $\omega$ tends to infinity as $i$ grows.

If for an infinite path in~$\Damma(T_{q+1})$ its projection in $T_{q+1}$ converges to a point $\omega\in\partial T_{q+1}$,
we say that the path itself \emph{converges to~$\omega$}.

\begin{lemma}
\label{lem:limitpoint}
If a central measure $\nu$ on the path space~$\Paths(\Damma(T_{q+1}))$ is ergodic, then there exists a point in $\partial T_{q+1}$ to which $\nu$-a.\,e.~path converges.
\end{lemma}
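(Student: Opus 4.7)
The plan is to use ergodicity to reduce the lemma to an almost-sure convergence statement for the projected walk $(\pi(w_i))$ in $T_{q+1}$, and then derive that convergence from the dimension-ratio limits of Lemma~\ref{lem:ergo-0}.

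First I would introduce the natural tail-measurable function recording the asymptotic direction: define $\Phi\colon\Paths(\Damma(T_{q+1}))\to\partial T_{q+1}\cup\{\bot\}$ by sending a path $(w_i)_{i\in\N_0}$ to $\lim_{i\to\infty}\pi(w_i)$ when this limit exists in $\partial T_{q+1}$, and to $\bot$ otherwise. Since the existence and value of this limit depend only on the tail of $(w_i)$, the function $\Phi$ is measurable with respect to the tail $\sigma$-algebra $\bigcap_n\mathfrak{A}_n$, which is $\nu$-trivial by ergodicity. Therefore $\Phi$ is $\nu$-almost surely constant, and it suffices to show that this constant is not $\bot$.

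A sequence in the tree $T_{q+1}$ fails to converge to a point of $\partial T_{q+1}$ exactly when either (a)~some vertex $v\in T_{q+1}$ is visited by the sequence infinitely often, or (b)~the sequence escapes every finite set but some edge of $T_{q+1}$ is crossed by the sequence infinitely often. Both events are tail-measurable, hence have $\nu$-probability $0$ or~$1$. In scenario~(a), if $v$ is visited infinitely often, then at least one neighbor $u$ of $v$ is also visited infinitely often (by pigeonhole). I would then pick a suitable test vertex $y$ in $\Damma(T_{q+1})$ at a well-chosen level and examine $\dim(y,w_i)/\dim(w_i)$: along the subsequences $\pi(w_i)=v$ and $\pi(w_i)=u$ this ratio reduces to expressions of the form $N_{i-n}(\pi(y),\cdot)/N_i(v_0,\cdot)$, whose asymptotics are governed by the classical behavior $N_k(a,b)\sim c_{d(a,b)}\,(2\sqrt{q})^k/k^{3/2}$ on the $(q+1)$-homogeneous tree. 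For $q\ge 2$ the constants $c_d$ genuinely depend on $d$, and a direct computation shows that the two subsequential limits differ, contradicting the $\nu$-a.s.\ convergence required by Lemma~\ref{lem:ergo-0}(i). In scenario~(b), selecting an edge $(v,x)$ of $\Damma(T_{q+1})$ with $v\prec x$ projecting onto the repeatedly crossed edge, the oscillation of $\pi(w_i)$ across it forces $\dim(x,w_i)/\dim(v,w_i)$ to have two distinct subsequential limits, contradicting its a.s.\ convergence to $p_\nu(v,x)$ given by Lemma~\ref{lem:ergo-0}(ii).

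The main obstacle will be the asymptotic analysis of the walk counts $N_k$ on $T_{q+1}$: one has to verify that the constants $c_d$ are genuinely $d$-dependent so that the dimension-ratio limits on different subsequences really differ. The hypothesis $q\ge 2$ is essential here, since in the case $q=1$ the chain $T_2=\Z$ admits the symmetric random walk as an ergodic central measure whose paths are recurrent and do not converge to the boundary; correspondingly, the constants $c_d$ for $q=1$ coincide, and the obstruction in scenario~(a) disappears.
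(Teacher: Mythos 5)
Your proposal is correct, and at the structural level it follows the same route as the paper: establish that the projected walk in $T_{q+1}$ is transient, use the tree geometry to convert transience into convergence to an end, and use tail-triviality to conclude that the limiting end is almost surely constant. The difference is in the first step, which the paper dispatches with the single sentence ``it is easy to check that the random walk corresponding to $\nu$ is nonrecurrent,'' whereas you supply a concrete mechanism: two distinct subsequential limits of $\dim(y,w_i)/\dim(w_i)$ along the visits to $v$ and to a neighbour $u$, contradicting the almost-sure convergence of Lemma~\ref{lem:ergo-0}. This does work --- the local limit constants on $T_{q+1}$ are $c_d\propto\bigl(1+\tfrac{q-1}{q+1}\,d\bigr)q^{-d/2}$, so the ratios $c_{d+1}/c_d$ are genuinely non-constant in $d$ exactly when $q\ge 2$, and your remark about $q=1$ correctly identifies why the obstruction disappears for the chain --- but it imports the full local limit theorem with its exact constants, where a cruder input (summability of the expected occupation of $\pi^{-1}(v)$, which needs only the $k^{-3/2}$ correction to the exponential growth of walk counts) already gives nonrecurrence by Borel--Cantelli for \emph{any} central measure. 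Two smaller observations: your scenario~(b) is vacuous, since for a nearest-neighbour sequence an edge crossed infinitely often has both endpoints visited infinitely often, contradicting escape from every finite set, so (b) is subsumed in (a); and the real content hidden in your dichotomy is the tree fact that a walk visiting each vertex finitely often eventually stays in a single component of the complement of each ball and hence converges to an end --- this is exactly the geometric step the paper invokes with ``since $T_{q+1}$ is a tree,'' and it deserves to be stated explicitly.
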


\begin{proof}
It is easy to check that the random walk in $T_{q+1}$ corresponding to the measure $\nu$ is nonrecurrent: for any vertex
$v\in T_{q+1}$, almost every (with respect to~$\nu$) path visits the set $\pi^{-1}(v)$ at most finitely many times. Therefore, since $T_{q+1}$ is a tree, $\nu$-a.\,e.\ path converges to a point in~$\partial T_{q+1}$. It remains to observe that, by ergodicity, the limiting points of almost all paths must coincide.\footnote{Here we have not assumed the time-homogeneity; this property for ergodic measures will be proved in the next section.}
\end{proof}

\section{Homogeneity}

This section is devoted to the proof of the following Proposition~\ref{pro:main-2}, which constitutes the first part of Theorem~\ref{thm:exit}.

\begin{proposition}
\label{pro:main-2}
Every ergodic central measure on the space $\Paths(\Damma(T_{q+1}))$ of infinite paths in the dynamic graph~$\Damma(T_{q+1})$ has the form~$\lambda_{\omega,r}$ for some $\omega \in \partial T_{q+1}$, $r\in [0,1]$, i.\,e., for every ergodic central measure~$\nu$ there is a point $\omega\in \partial T_{q+1}$ such that the transition probabilities corresponding to~$\nu$ coincide on all  $\omega$-directed edges, and also coincide on all the other edges.
\end{proposition}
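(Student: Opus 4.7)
The plan is to combine the martingale-type Lemma~\ref{lem:ergo-0} with Lemma~\ref{lem:limitpoint} and a combinatorial analysis of walks on $T_{q+1}$. The argument splits into a local symmetry step, which reduces the transition-probability data at each vertex to a single parameter, and a global constancy step (the main difficulty) showing this parameter does not depend on the vertex.

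For the local step, I would first invoke Lemma~\ref{lem:limitpoint} to fix the end $\omega \in \partial T_{q+1}$ to which $\nu$-a.e.\ path converges. Given a vertex $v \in \Damma(T_{q+1})$ at level $n$ with $a := \pi(v)$, its $q+1$ successors project to the $q+1$ neighbors of $a$: one $\omega$-ward neighbor $a^\omega$ (the successor $x^\omega$) and $q$ others $a^1,\dots,a^q$ (the successors $x^1,\dots,x^q$). For a $\nu$-typical path $(w_i)$, the projection $\pi(w_i)$ eventually lies in the $\omega$-branch from $a$, so the geodesic from $a$ to $\pi(w_i)$ begins with the edge $a \to a^\omega$, giving
$$
d(\pi(x^\omega),\pi(w_i)) = d(a,\pi(w_i))-1,\qquad d(\pi(x^j),\pi(w_i)) = d(a,\pi(w_i))+1
$$
for every $j = 1,\dots,q$. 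Writing $N_k(d)$ for the number of walks of length $k$ in $T_{q+1}$ between two vertices at tree-distance $d$, one has $\dim(x,w_i) = N_{i-\ell}(d(\pi(x),\pi(w_i)))$ with $\ell$ the level of $x$, so $\dim(x^j,w_i)$ is independent of $j \in \{1,\dots,q\}$. Lemma~\ref{lem:ergo-0}(ii) then supplies a common value $s(v)$ for the probabilities $p_\nu(v,x^j)$, and I set $r(v) := p_\nu(v,x^\omega) = 1 - q\,s(v)$.

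For the global step, I would first note that, by ergodicity, the drift $\tau := \lim_i d(\pi(\varnothing),\pi(w_i))/i$ is a tail-measurable function of the path and hence $\nu$-a.s.\ equal to a constant. Since $|d(a,c) - d(\pi(\varnothing),c)| \le d(\pi(\varnothing),a)$ for every vertex $c$, the ratio $d(a,\pi(w_i))/(i - n)$ also tends to $\tau$ irrespective of $v$; Lemma~\ref{lem:ergo-0}(ii) thus exhibits
$$
r(v) = \lim_i \frac{N_{i-n-1}(d(a,\pi(w_i))-1)}{N_{i-n}(d(a,\pi(w_i)))}.
$$

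The main obstacle is then the combinatorial claim that $\lim_{k,d\to\infty,\,d/k\to\tau} N_{k-1}(d-1)/N_k(d)$ exists and depends on $\tau$ alone. I would attack it through the recursion $N_k(d) = N_{k-1}(d-1) + q\,N_{k-1}(d+1)$, rewriting the ratio as $1/(1 + q\rho_k(d))$ with $\rho_k(d) := N_{k-1}(d+1)/N_{k-1}(d-1)$, so that it suffices to show $\rho^*(\tau) := \lim\rho_k(d)$ exists and depends only on $\tau$. This is provided by the standard large-deviation asymptotics $(1/k)\log N_k(d) \to h(d/k)$ for the distance of the simple random walk on $T_{q+1}$, yielding $\rho^*(\tau) = e^{2h'(\tau)}$. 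Granted the claim, $r(v) = 1/(1 + q\rho^*(\tau))$ is a common constant $r$ for every $v$, so $s(v) = (1-r)/q$ is likewise constant, and $\nu = \lambda_{\omega,r}$ as required.
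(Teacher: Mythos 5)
Your local step (equal transition probabilities on the $q$ non-$\omega$-directed edges at each vertex, via $\dim(x^j,w_i)$ being independent of $j$ for large $i$) is exactly the paper's argument. The global step, however, is a different route and it has genuine gaps. First, you assert that the drift $\tau=\lim_i d(\pi(\varnothing),\pi(w_i))/i$ exists and is a.s.\ constant ``by ergodicity and tail-measurability.'' Tail-measurability gives a.s.\ constancy of $\limsup$ and $\liminf$ separately; it does not give existence of the limit, and for a general ergodic central measure you have no stationarity to invoke a law of large numbers (time-homogeneity is precisely what is being proved). Second, and more seriously, the combinatorial claim that $N_{k-1}(d-1)/N_k(d)$ converges to a function of $\tau$ alone cannot be extracted from the large-deviation asymptotics $\frac1k\log N_k(d)\to h(d/k)$. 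The ratio $\rho_k(d)=N_{k-1}(d+1)/N_{k-1}(d-1)$ is an $O(1)$ quantity equal to $\exp\bigl(k\,[h(\tfrac{d+1}{k})-h(\tfrac{d-1}{k})]+o(k)\bigr)$: the target $e^{2h'(\tau)}$ lives at scale $O(1)$ while the error from the log-asymptotics lives at scale $e^{o(k)}$, which is uncontrolled. To make this work you would need sharp (local-limit/saddle-point) asymptotics for $S_q(n,k)$ with multiplicative $1+o(1)$ error, uniformly over $d/k\to\tau$, including the degenerate regime $\tau=0$ (which corresponds to the critical case $r=1/2$) where polynomial corrections dominate. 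You also leave out the degenerate case in which some transition probabilities vanish; the paper treats it separately and shows such a measure must be the Dirac measure $\lambda_{\omega,1}$ on the geodesic to $\omega$.

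For comparison, the paper avoids all asymptotics of $N_k(d)$ by a shift trick: for adjacent vertices $a\prec b$ it finds infinitely many times $j$ at which $d(\pi(a),\pi(w_j))=d(\pi(b),\pi(w_{j+1}))$ and $d(\pi(a_\omega),\pi(w_j))=d(\pi(b_\omega),\pi(w_{j+1}))$ (namely the times when the walk steps away from, respectively toward, $\pi(a)$ --- both occur infinitely often, the latter by nondegeneracy). Since $\dim$ depends only on the level difference and the tree distance, the two ratio sequences whose limits are $p_\nu(a,a_\omega)$ and $p_\nu(b,b_\omega)$ agree \emph{exactly} along this infinite subsequence of shifted indices, and Lemma~\ref{lem:ergo-0} already guarantees both limits exist; hence they are equal. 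This gives constancy along $\omega$-directed edges with no drift and no analysis of $N_k(d)$. If you want to salvage your route, you must either supply the sharp asymptotics of $S_q(n,k)$ (they are in the literature cited in Section~\ref{sec:PHT}) together with an a priori proof that the drift exists, or switch to the exact-equality-at-shifted-times argument.
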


\begin{proof}[Proof.]
First consider the case of a \emph{nondegenerate}\footnote{A measure $\nu$ on $\Paths(\Damma(T_{q+1}))$ is called \emph{nondegenerate} if for every finite path~$P$ in $\Damma(T_{q+1})$ the measure $\nu(P)$ of the set of infinite paths starting with~$P$ is positive. (In other words, a measure is nondegenerate if its transition probabilities do not vanish on any edge.)} ergodic measure~$\nu$.

By Lemma~\ref{lem:limitpoint},
the measure~$\nu$ determines a point $\omega\in\partial T_{q+1}$ to which $\nu$-a.\,e.\ path converges. We will show that
$\nu=\lambda_{\omega,r}$ for some $r\in[0,1]$.

First note that at every vertex $v\in\Damma(T_{q+1})$, the transition probability $p_\nu$ takes equal values on  those edges outgoing from $v$ that are not $\omega$-directed. Indeed, if
 $(v,v')$ and $(v,v'')$ are two edges that are not $\omega$-directed and $(w_i)_{i\in\N_0}$ is a $\nu$-typical path converging to~$\omega$, then, by symmetry considerations, for all sufficiently large $j\in\N$ we have $\dim(v',w_j)=\dim(v'',w_j)$, which, since $(w_i)_{i\in\N_0}$ is a typical path, by Lemma~\ref{lem:ergo-0} implies that
\begin{equation*}
p_\nu(v,v')=\lim_{j\to\infty}\frac{\dim(v',w_j)}{\dim(v,w_j)}=\lim_{j\to\infty}\frac{\dim(v'',w_j)}{\dim(v,w_j)}=p_\nu(v,v'').
\end{equation*}

Since every vertex of the dynamic graph $\Damma(T_{q+1})$ has exactly one outgoing $\omega$-directed edge,
the fact that $\nu$ belongs to the family $\{\lambda_{\omega,r}:r\in[0,1]\}$ is implied by the following assertion.

\emph{The transition probabilities $p_\nu$ corresponding to the measure $\nu$ coincide on all
$\omega$-directed edges.}

Since the graph $\Damma(T_{q+1})$ is connected, it suffices to check only that
 \emph{the transition probabilities $p_\nu$ coincide on $\omega$-directed edges outgoing from adjacent vertices.}

Let $a\prec b$ be adjacent vertices in~$\Damma(T_{q+1})$.
Given a vertex~$v$, by $v_\omega$ we denote the end vertex of the  $\omega$-directed edge outgoing from~$v$.
We must show that $p_\nu(a,a_\omega)=p_\nu(b,b_\omega)$.

As above, let $(w_i)_{i\in\N_0}$ be a typical path converging to $\omega$.
Then, by Lemma~\ref{lem:ergo-0}, we have
\begin{equation}
\label{eq:egro-2=2}
\begin{split}
\lim_{i\to\infty}\frac{\dim(a_\omega,w_i)}{\dim(a,w_i)}&=p_\nu(a,a_\omega),\\
\lim_{i\to\infty}\frac{\dim(b_\omega,w_i)}{\dim(b,w_i)}&=p_\nu(b,b_\omega).
\end{split}
\end{equation}
We will show that there are infinitely many $j\in\N$ such that
\begin{equation}
\label{eq:ii+1}
\begin{split}
\dim(a,w_j)&=\dim(b,w_{j+1}),\\
\dim(a_\omega,w_j)&=\dim(b_\omega,w_{j+1}),
\end{split}
\end{equation}
which, in view of~\eqref{eq:egro-2=2}, immediately implies the desired equation $p_\nu(a,a_\omega)=p_\nu(b,b_\omega)$.

Note that $\dim(x,y)$, the number of paths leading from a vertex $x$ to a vertex $y$ in the graph $\Damma(T_{q+1})$, coincides with the number of walks of length $|L(x)-L(y)|$ between the vertices $\pi(x)$ and $\pi(y)$ in the tree~$T_{q+1}$ (hereafter by $L(z)$ we denote the number of the level containing a vertex $z$). Besides, note that in a homogeneous tree the number of walks of given length between two vertices depends only on the distance between these vertices. Finally, observe that, since
 $L(b)=L(a)+1$, $L(v_\omega)=L(v)+1$, $L(w_{i+1})=L(w_i)+1$  by construction, for every $i\in\N$ we have (cf.~\eqref{eq:ii+1})
\begin{equation}
\label{eq:ii+1-1}
\begin{split}
L(a)-L(w_i)&=L(b)-L(w_{i+1}),\\
L(a_\omega)-L(w_i)&=L(b_\omega)-L(w_{i+1}).
\end{split}
\end{equation}
It follows that~\eqref{eq:ii+1} holds for every $j\in\N$ for which
\begin{equation}
\label{eq:ii+1-2}
\begin{split}
d(\pi(a),\pi(w_j))&=d(\pi(b),\pi(w_{j+1})),\\
d(\pi(a_\omega),\pi(w_j))&=d(\pi(b_\omega),\pi(w_{j+1})).
\end{split}
\end{equation}

Thus it remains to prove that there are infinitely many $j\in\N$ for which~\eqref{eq:ii+1-2} holds.

Two cases are possible:
\begin{itemize}
\item[(i)] $b=a_\omega$, i.\,e., the edge $(a,b)$ is $\omega$-directed,
\item[(ii)] $b\neq a_\omega$, i.\,e., the edge $(a,b)$ is not $\omega$-directed.
\end{itemize}

Since the walk $(\pi(w_i))_{i\in\N_0}$ moves along adjacent vertices of the graph $T_{q+1}$, for any $u\in T_{q+1}$ and $i\in\N$ we have
$$
d(u,\pi(w_{i+1}))=d(u,\pi(w_i))\pm1.
$$
Since $(\pi(w_i))_{i\in\N_0}$ converges to $\omega$, in case (i) relations~\eqref{eq:ii+1-2} hold whenever $j$ is sufficiently large and
\begin{equation}
\label{eq:udal}
d(\pi(a),\pi(w_{j+1}))=d(\pi(a),\pi(w_{j}))+1,
\end{equation}
and in case (ii) relations~\eqref{eq:ii+1-2} hold whenever $j$ is sufficiently large and
\begin{equation}
\label{eq:pribl}
d(\pi(a),\pi(w_{j+1}))=d(\pi(a),\pi(w_{j}))-1.
\end{equation}

Moments for which \eqref{eq:udal} holds are infinitely many, since $(\pi(w_i))_{i\in\N_0}$ moves along adjacent vertices and converges to~$\omega$. Moments for which~\eqref{eq:pribl} holds are also infinitely many, since otherwise
all edges $(w_i,w_{i+1})$ starting from some $i$ would be $\omega$-directed, which contradicts (as follows from Lemma~\ref{lem:ergo-0}) the assumption that $\nu$ is nondegenerate.
\medskip

\emph{The case of a degenerate measure.}

In the case of a degenerate ergodic measure~$\nu$, when in~$\Damma(T_{q+1})$ there are $\nu$-inaccessible vertices and the transition probability  $p_\nu$ vanishes on some edges, we will prove that $\nu$ has the form~$\lambda_{\omega,1}$.

Indeed, if a vertex $v$ is $\nu$-inaccessible, i.\,e., if the measure of the set of infinite paths passing through $v$ vanishes, then, by the centrality of $\nu$, all vertices succeeding $v$ (i.\,e., vertices lying at lower levels and connected with $v$ by paths) are also $\nu$-inaccessible. Besides, recall that, by Lemma~\ref{lem:limitpoint}, the ergodic measure~$\nu$ determines a point $\omega\in\partial T_{q+1}$ to which $\nu$-a.\,e.\ path converges. But the set of paths converging to $\omega$ that pass neither through $v$ nor through vertices succeeding $v$ is at most countable. Thus $\nu$ is supported  by a set of paths that is at most countable. By the ergodicity of $\nu$ it follows that it is supported by a single path, and the centrality implies that this path forms a one-point class of the tail partition. Such a path for $\omega$ is unique, and the measure supported by this path is~$\lambda_{\omega,1}$.
\end{proof}

\section{The list of ergodic central measures, and the decomposition of nonergodic measures into ergodic components}

To complete the proof of Theorem~\ref{thm:exit}, it remains to prove the following proposition.

\begin{proposition}
\label{pro:main-3}
A measure of the form $\lambda_{\omega,r}$ is ergodic if and only if $r\in[1/2,1]$.
\end{proposition}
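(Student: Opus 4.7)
The proof splits into showing non-ergodicity for $r\in[0,1/2)$ and ergodicity for $r\in[1/2,1]$.

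For $r\in[0,1/2)$ I would exhibit a non-trivial tail function. Under $\nu = \lambda_{\omega,r}$ the Busemann function $h_\omega(\pi(w_n))$ is a nearest-neighbor walk on $\Z$ with down-probability $r$, hence drifts to $+\infty$ a.s.\ when $r<1/2$; by tree transience (as in the proof of Lemma~\ref{lem:limitpoint}), $\pi(w_n)$ then converges $\nu$-a.s.\ to some $\omega' \in \partial T_{q+1}\smallsetminus\{\omega\}$. The law of the tail-measurable $\omega'$ is not a Dirac mass, because $\nu$ is invariant under the group $G$ of tree automorphisms of $T_{q+1}$ fixing $v_0$ and $\omega$, and $G$ acts on $\partial T_{q+1}\smallsetminus\{\omega\}$ without fixed points for $q\ge 2$ (one can always permute off-ray subtrees at some vertex of $[v_0,\omega\rangle$). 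Hence ${\frak A}_\infty$ is non-trivial and $\nu$ is not ergodic.

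For $r\in[1/2,1]$ the case $r=1$ is trivial (Dirac on the geodesic ray), so let $r\in[1/2,1)$ and $\nu=\lambda_{\omega,r}$. For a vertex $v$ of $\Damma(T_{q+1})$ at level $L=\lev(v)$ with $h_v := h_\omega(\pi(v))$, the bounded martingale $\mathbb{P}_\nu(\text{path starts with } (v_0,\dots,v_L=v) \mid {\frak A}_n) = \dim(v,w_n)/\dim(w_n)$ converges $\nu$-a.s.; the plan is to show its limit is the deterministic constant $r^{(L-h_v)/2}((1-r)/q)^{(L+h_v)/2}$. Since these cylinders generate the path $\sigma$-algebra, triviality of all such limits forces ${\frak A}_\infty$ to be trivial, hence $\nu$ ergodic. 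As a preliminary I would verify $\pi(w_n)\to\omega$ $\nu$-a.s.: for $r>1/2$ this is the negative-drift SLLN for the Busemann walk; for the critical $r=1/2$ it requires an on-ray/off-ray decomposition, using that every off-ray excursion returns to $[v_0,\omega\rangle$ a.s.\ (symmetric $\pm 1$ chain absorbed at $0$) and that the induced walk on the ray is biased upward with ratio $q{:}1$ (conditional on leaving a ray vertex), so the on-ray position diverges to $+\infty$. Once $\pi(w_n)\to\omega$, the geodesic identity $d(\pi(v),\pi(w_n)) = d(v_0,\pi(w_n)) + h_v$ holds for all sufficiently large $n$, and the ratio reduces to $N_{n-L}(D_n+h_v)/N_n(D_n)$ with $D_n := d(v_0,\pi(w_n))$ and $N_m(d)$ the number of walks of length $m$ on $T_{q+1}$ between vertices at distance~$d$.

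The heart of the proof is computing the limit of this ratio, using the local/large-deviations expansion $\log N_m(d) = m\log(q+1) - d\log q - mI(d/m) - \tfrac12\log m + O(1)$, where $I$ is the Cram\'er rate function of the distance chain on $\N_0$ with step probabilities $q/(q+1)$ up and $1/(q+1)$ down. For $r>1/2$ one has $D_n/n\to\tau := 2r-1 > 0$ a.s., and a first-order Taylor expansion of $I$ at $\tau$ yields a constant limit matching the target via the identities $\tfrac12\log(r(1-r)/q) = -\log(q+1) + I(\tau) - \tau I'(\tau)$ and $\tfrac12\log((1-r)/(qr)) = -\log q - I'(\tau)$. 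For the critical $r=1/2$ one has $\tau=0$ and typically $D_n=O(\sqrt n)$, yet $D_n\to\infty$ a.s.\ from the on-ray analysis; using $I(0) = \log(q+1) - \log(2\sqrt q)$ and $I'(0) = -\tfrac12\log q$, the same first-order expansion gives the limit $2^{-L} q^{-(L+h_v)/2}$, matching the target at $r=1/2$. The main obstacle is precisely this critical case: with zero Busemann drift one must justify via the on-ray/off-ray decomposition that $D_n$ still diverges a.s., and verify that the second-order remainder in the Taylor expansion of $I$ contributes $o(1)$ despite $D_n$ being sublinear. This is where the phase transition manifests: a further decrease of $r$ below $1/2$ splits the a.s.\ limit of the ratio along the random end $\omega'$, recovering the non-ergodicity of the first part.
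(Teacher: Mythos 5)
Your argument for non‑ergodicity when $r<1/2$ is essentially the paper's: both identify the limit end $\omega'$ of the projected walk as a tail‑measurable random variable and use invariance under the automorphisms of $T_{q+1}$ fixing $v_0$ and $\omega$ to see that its law is not a point mass, contradicting Lemma~\ref{lem:limitpoint}. For $r\in[1/2,1]$, however, you take a genuinely different route. The paper's argument is soft: it already knows from Proposition~\ref{pro:main-2} that every ergodic measure lies in the family $\lambda_{\omega,r}$, it shows (Lemma~\ref{lem:lambdawr}) that under $\lambda_{\omega_0,r_0}$ almost every path converges to $\omega_0$ with drift exactly $2r_0-1$, and then observes that a nontrivial ergodic decomposition of $\lambda_{\omega_0,r_0}$ over the family would force a nondegenerate distribution of these almost-sure invariants --- impossible. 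This buys brevity and sidesteps all asymptotics of path counts, at the price of leaning on the classification in Proposition~\ref{pro:main-2}. Your route --- proving directly that every tail-conditional cylinder probability $\dim(v,w_n)/\dim(w_n)$ converges a.s.\ to the deterministic value $r^{(L-h_v)/2}((1-r)/q)^{(L+h_v)/2}$ --- is self-contained for this half of the proposition, is in effect a proof of minimality of the corresponding eigenfunctions (a ratio limit theorem for the Martin kernel), and your identities relating $I(\tau)$, $I'(\tau)$ to $r$ check out; but it is substantially heavier.

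There is one concrete point where your write-up, as stated, does not close. An expansion of $\log N_m(d)$ with an error term that is merely $O(1)$ identifies the limit of $N_{n-L}(D_n+h_v)/N_n(D_n)$ only up to a bounded multiplicative factor, and knowing that the a.s.\ limit of the reverse martingale lies within a bounded factor of the target constant does not force it to \emph{equal} that constant (nor even to be a.s.\ constant, since the $O(1)$ could a priori depend on the fluctuations of $D_n$). You need a genuine local limit theorem in the large-deviation regime, with a prefactor $c(d/m)+o(1)$ depending continuously on $d/m$ (obtainable by Stirling from the explicit formula $S_q(n,k)=\sum_t q^{(n-t)/2}\frac{t+1}{n+1}C_{n+1}^{(n-t)/2}$ given in Section~\ref{sec:PHT}), so that the prefactors cancel in the ratio; and at the critical point $r=1/2$ this uniformity must be established in the regime $D_n\to\infty$, $D_n=o(n)$, where the factor $\frac{t+1}{n+1}$ only cancels because $D_n\to\infty$ --- which is exactly why your on-ray/off-ray argument (excursions return a.s., induced ray walk biased $q{:}1$ upward) is indispensable and correct to include. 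With the sharpened local limit input the argument goes through; without it, the final identification of the limit is a gap.
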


\begin{proof}[Proof of the nonergodicity for $r<1/2$]\
Let $\omega\in\partial T_{q+1}$ and $r\in[0,1/2)$.
Consider the measure $\lambda_{\omega,r}$ and the corresponding Markov process on~$T_{q+1}$. This process is time-homogeneous by the definition of  $\lambda_{\omega,r}$; its transition probability  on a directed edge  $(x,y)$ is equal to $r$ if $y$ lies between $x$ and $\omega$, and  $\frac{1-r}{q}$ otherwise.

We see that a.\,e.\ trajectory of the process converges to a point of $\partial T_{q+1}\ssm\{\omega\}$, since it moves away from $\omega$ along $\omega$-horospheres with drift
$1-2r$ and moves along adjacent vertices. It follows that the distribution on $\partial T_{q+1}\ssm\{\omega\}$ of the limits of trajectories of this Markov process is given by a continuous measure, since the Markov process itself, and hence this limiting measure on  $\partial T_{q+1}\ssm\{\omega\}$, are invariant under automorphisms of the tree $T_{q+1}$ leaving the ray $[\pi(\varnothing),\omega\rangle$ unchanged, and the group of such automorphisms has no finite orbits in
 $\partial T_{q+1}\ssm\{\omega\}$.

This continuity of the distribution of limiting points in ${\partial T_{q+1}}$ shows that $\lambda_{\omega,r}$ is not ergodic, since Lemma~\ref{lem:limitpoint} implies that almost all trajectories of the Markov process in $T_{q+1}$ corresponding to an ergodic central measure converge to the same point in~${\partial T_{q+1}}$.

\smallskip
\noindent{\it Proof of the ergodicity for $r\ge 1/2$.}\footnote{To prove that the measures $\lambda_{\omega,r}$, $r\ge 1/2$, are ergodic, we could use the entropy criterion from \cite{KV}, which applies to our situation, but we use instead the fact that a list of measures containing all ergodic measures is already found, and it remains to pick those of them that are not integral combinations of other measures. Note that the standardness of the filtration, whenever it holds, gives a powerful ergodicity criterion.} It suffices to observe the following. First, we prove that almost every path with respect to the measure $\lambda_{\omega,r}$ for $r\ge \frac{1}{2}$ tends to $\omega$ with rate $2r-1$ (see Lemma~\ref{lem:lambdawr} below). On the other hand, every nonergodic measure can be uniquely decomposed into an integral over ergodic measures, which, by the above, are contained among the members of the family
$\Lambda_q=\left\{\lambda_{\omega,r} \mid \omega\in\partial T_{q+1}, r\in\left[1/2,1\right]\right\}$.
But a measure $\lambda_{\omega_0,r_0}\in\Lambda_q$ cannot be written as an integral over a measure on~$\Lambda_q$ that is not the Dirac measure at  $(\omega_0,r_0)$,  but is supported by a set of parameters ${\omega,r}$, since in this case the limits of $\lambda_{\omega_0,r_0}$-almost all  paths and their convergence rates would be different from the values $\omega_0$ and/or $2r_0-1$, respectively, with which they must coincide for almost all paths.
\end{proof}

\begin{lemma}
\label{lem:lambdawr}
Assume that $\omega\in\partial T_{q+1}$ and $r\in[1/2,1]$.
In the tree $T_{q+1}$, denote by $v_k$ the vertex of the ray $[\pi(\varnothing),\omega\rangle$ at distance
$k\in\N_0$ from the initial vertex $\pi(\varnothing)$. Then $\lambda_{\omega,r}$-a.\,e.\
 path $(w_i)_{i\in\N_0}$ converges to $\omega$ and
\begin{equation*}
\label{eq:LLN-1}
\frac{d(\pi(w_n),v_{\lfloor (2r-1) n \rfloor})}{n}\xrightarrow{n\to\infty} 0.
\end{equation*}
\end{lemma}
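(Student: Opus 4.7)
The plan is to project the walk to the integers via the Busemann function at $\omega$. Let $b_\omega\colon T_{q+1}\to\Z$ denote the Busemann function based at $\omega$, normalized so that $b_\omega(\pi(\varnothing))=0$; then $b_\omega$ decreases by $1$ along every edge oriented toward $\omega$ and increases by $1$ along every other edge. Under $\lambda_{\omega,r}$ each vertex has exactly one outgoing $\omega$-directed edge (taken with probability $r$) and $q$ non-$\omega$-directed edges (each of probability $(1-r)/q$), so the increments of $h_n:=b_\omega(\pi(w_n))$ are i.i.d.\ with $P(\Delta h=-1)=r$ and $P(\Delta h=+1)=1-r$. The strong law of large numbers then gives $h_n/n\to 1-2r=-(2r-1)$ almost surely.

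Next I would introduce coordinates adapted to the ray. For $v\in T_{q+1}$ let $M(v)\in\N_0$ be the largest index such that $v_{M(v)}$ lies on the geodesic $[\pi(\varnothing),v]$ (the branching point off the ray), and set $D(v)=d(v,v_{M(v)})$. Then $b_\omega(v)=D(v)-M(v)$ and $d(\pi(\varnothing),v)=M(v)+D(v)$. Writing $M_n=M(\pi(w_n))$ and $D_n=D(\pi(w_n))$, convergence of $\pi(w_n)$ to $\omega$ is equivalent to $M_n\to\infty$, and one computes
\[
d(\pi(w_n),v_{\lfloor(2r-1)n\rfloor})=D_n+\bigl|M_n-\lfloor(2r-1)n\rfloor\bigr|,
\]
so the lemma reduces to the two statements $D_n/n\to 0$ and $M_n/n\to 2r-1$ almost surely.

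For $r\in(1/2,1]$ the SLLN immediately yields $h_n\to-\infty$, so $M_n\ge -h_n\to\infty$, giving convergence to $\omega$. The process $D_n$ is a birth--death chain on $\N_0$ whose off-zero transitions are $D\mapsto D-1$ with probability $r>1/2$ and $D\mapsto D+1$ with probability $1-r$; it is strictly subcritical, positive recurrent with a geometric stationary distribution, and excursions from $0$ have exponentially decaying length. A Borel--Cantelli argument then gives $D_n=O(\log n)$ a.s., so $D_n/n\to 0$, and combined with $M_n=D_n-h_n$ this produces $M_n/n\to 2r-1$. The endpoint $r=1$ is immediate since the walk is deterministic along the ray.

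The main technical hurdle is the critical case $r=1/2$: here $h_n$ is only a recurrent symmetric random walk and fails to drift to $-\infty$. I would handle it as follows. Off-ray excursions are simple symmetric random walks on $\N$ and so return to $0$ almost surely, so the walk visits the ray infinitely often. The embedded chain of $M$-values at consecutive ray-visit times has increments $+1$ with probability $1/2$ (a direct step $v_M\to v_{M+1}$), $-1$ with probability $1/(2q)$ (step to $v_{M-1}$), and $0$ with probability $(q-1)/(2q)$ (a returning subtree excursion), so its drift is $(q-1)/(2q)>0$; by the SLLN this embedded chain tends to $+\infty$, giving $M_n\to\infty$ and convergence to $\omega$. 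For the rates, the $D$-chain at $r=1/2$ is null recurrent, so by the ratio ergodic theorem the fraction of on-ray steps tends to $0$ almost surely; since $M_n$ changes only during on-ray steps and by at most $1$, this forces $M_n/n\to 0=2r-1$, and hence $D_n/n=(h_n+M_n)/n\to 0$.
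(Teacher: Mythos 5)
Your proof is correct, and it begins exactly where the paper's does: both project the walk to $\Z$ through the horofunction (Busemann function) at $\omega$, note that the image is the nearest-neighbour walk on $\Z$ with drift of magnitude $2r-1$ toward $\omega$, and invoke the strong law of large numbers. The divergence is in how the $\Z$-statement is pulled back to the tree. The paper's argument (two sentences, ending with ``we omit the details'') hinges on the observation that the times at which $-\hor_{\omega,v_0}(\pi(w_n))$ renews its running maximum are precisely times at which $\pi(w_n)$ lies on the ray $[\pi(\varnothing),\omega\rangle$; in your notation this yields the sandwich $-h_n\le M_n\le\max_{k\le n}(-h_k)$ and hence the rate. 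You instead make the coordinates $(M_n,D_n)$ explicit, verify $b_\omega=D-M$ and the distance identity $d(\pi(w_n),v_j)=D_n+|M_n-j|$, and then control $D_n$ as a birth--death chain (subcritical for $r>1/2$, null recurrent for $r=1/2$) and $M_n$ via the chain embedded at ray-visit times. Your route is longer, but it buys a genuinely complete treatment of the critical case $r=1/2$ --- where convergence to $\omega$ does not follow from the sign of the drift and the paper says nothing --- through the positive drift $(q-1)/(2q)$ of the embedded chain and the null-recurrence bound showing that on-ray steps have density zero. The only points worth tightening are bookkeeping ones: the transition law of the $D$-chain at state $0$, and of the embedded $M$-chain, differs at the root $M=0$ from its form at $M\ge1$, so one should record that $M_n\to\infty$ is proved first and that the chains are time-homogeneous from the a.s.\ finite last visit to the root onward. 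These are exactly the details the paper itself waves away.
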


\begin{proof}
It is convenient to base the proof on the projection
\begin{equation}
\label{eq:Z-RW-0}
\Damma(T_{q+1})\xrightarrow{\pi} T_{q+1} \xrightarrow{-\hor_{\omega,v_0}} \Z,
\end{equation}
where $\hor_{\omega,v_0}$ is the horofunction\footnote{A horofunction on a countable metric space
$(X,d)$ is a function that is unbounded from below and is the pointwise limit of functions of the form $d(x,x_0)+C$ where $x_0\in X$ and $C\in\R$.} on $T_{q+1}$ tending to~$-\infty$ on the rays representing~$\omega$ and vanishing  at the point $v_0=\pi(\varnothing)$. This projection sends the measure $\lambda_{\omega,r}$ to the random walk on~$\Z$ with transition probabilities
\begin{equation}
\label{eq:Z-RW-1}
p(z,z+1)=r, \qquad p(z,z-1)=1-r.
\end{equation}
The  key observation is as follows. Whenever the projection $(-\hor_{\omega,v_0}(\pi(w_n)))_{n\in\N_0}$ renews its maximum in~$\Z$, the point $\pi(w_n)$ hits the ray $[\pi(\varnothing),\omega\rangle$. We omit the details.
\end{proof}

Let us give an explicit formula for the decomposition into ergodic components.

\paragraph{The decomposition of a nonergodic measure $\lambda_{\omega,r}$.}
For $r<1/2$, the central measure $\lambda_{\omega_0,r}$ decomposes into ergodic components $\lambda_{\omega,1-r}$:
\begin{equation*}
\label{eq:Radon-Nik-0}
\lambda_{\omega_0,r} = \int\limits_{\partial T_{q+1}} \lambda_{\omega,1-r}\cdot\rho_{\omega_0,r}(d\omega).
\end{equation*}
The distribution $\rho_{\omega_0,r}$ is absolutely continuous with respect to the harmonic measure
$\theta=\theta_{v_0}$ on $\partial T_{q+1}$ symmetric with respect to the initial vertex $v_0=\pi(\varnothing)$. The Radon--Nikodym derivative $\frac{d\rho_{\omega_0,r}}{d\theta}$ is given by the formula
\begin{equation}
\label{eq:Radon-Nik-1}
\frac{d\rho_{\omega_0,r}}{d\theta}(\omega)=\frac{(1-2r)(q+1)}{q-qr-r}\left(\frac{qr}{1-r}\right)^{\ell(\omega_0,\omega)},
\end{equation}
where $\ell(\omega,\omega_0)$ is the length of the common part of the rays $[v_0,\omega\rangle$ and $[v_0,\omega_0\rangle$, and the expression $0^0$ appearing for $r=\ell(\omega_0,\omega)=0$ should be interpreted as~$1$.

To derive formula~\eqref{eq:Radon-Nik-1}, we use the fact that the above projection~\eqref{eq:Z-RW-0} sends the measure $\lambda_{\omega,r}$ to the random walk on~$\Z$ with the transition probabilities~\eqref{eq:Z-RW-1},
and for $r<1/2$ the fraction of trajectories of this walk that reach the point $k\in\N_0\subset\Z$ but not $k+1$ is equal to
$$
\left(\frac{r}{1-r}\right)^k-\left(\frac{r}{1-r}\right)^{k+1}.
$$

\section{Phase transition, the Martin boundary}
\label{sec:PHT}

In this section we discuss an interpretation of the above-mentioned phase transition, as well as the properties of the Martin boundary of the graph~$\Damma(T_{q+1})$.

As we have seen, as the parameter $r$ passes through the value~$\frac{1}{2}$, the measures~$\lambda_{\omega,r}$
lose ergodicity. For the critical value, the ergodicity still holds, but the behavior of trajectories changes. If
 $r>\frac{1}{2}$, then $\lambda_{\omega,r}$-almost all paths tend to the limit~$\omega$ with the linear (in~$n$) rate $2r-1$. If $r=\frac{1}{2}$, then almost all paths still tend to~$\omega$, but with a sublinear rate; in this case, the behavior of paths is worth a more detailed study, which we do not present here.

To describe the main interpretation of the phase transition, we establish a correspondence between the family of measures
$\lambda_{\omega,r}$ and a special family of eigenfunctions of the Laplace operator on the homogeneous tree~$T_{q+1}$.

First recall that the set $\mathscr{H}_{\min}$ of minimal positive harmonic (i.\,e., invariant under the Laplace operator) functions  on the homogeneous tree~$T_{q+1}$ coincides with the family of functions of the form $q^{-\hor(v)}$ where $\hor(v)$ is an arbitrary horofunction on~$T_{q+1}$. It is easy to check that for every (real or complex) $\alpha$, the power $(q^{-\hor(v)})^\alpha=q^{-\alpha\hor(v)}$ of the minimal harmonic function~$q^{-\hor(v)}$ is an eigenfunction of the Laplace operator with the eigenvalue
\begin{equation*}
s_\alpha=\frac{q^\alpha+q^{1-\alpha}}{q+1}.
\end{equation*}

On the other hand, every positive eigenfunction of the Laplace operator on the base graph determines a central Markov measure on the path space of the corresponding dynamic graph: the transition probability
 $p_{\mu}$ of the central measure $\mu=\mu_f$ corresponding to an eigenfunction $f$ with eigenvalue~$s$ on an edge $(v,w)$ with $v\prec w$ is given by the formula
\begin{equation}
\label{eq:p_mu}
p_{\mu}(v,w)=\frac{f(\pi(w))}{f(\pi(v))(q+1)s}.
\end{equation}
Formula~\eqref{eq:p_mu} provides a bijection between the normalized positive eigenfunctions of the Laplace operator on $T_{q+1}$ and the nondegenerate central measures that, being Markov measures on the path space of $\Damma(T_{q+1})$,
are time-homogeneous. Under this bijection, minimal eigenfunctions correspond to ergodic central measures.

If $\omega$ is an end of the tree~$T_{q+1}$,  $\hor_{\omega}$ is a horofunction on $T_{q+1}$ tending to~$-\infty$ on the rays representing~$\omega$, and $\alpha\in\R$, then the map
 \eqref{eq:p_mu} sends the positive eigenfunction $q^{-\alpha\hor_{\omega}(v)}$ to the central measure
$\lambda_{\omega,r_\alpha}$ where\footnote{For $\alpha=\frac12$, we obtain $r_{\frac12}=\frac12$.}
\begin{equation}
\label{eq:r-alpha}
r_\alpha=\frac{1}{1+q^{1-2\alpha}}.
\end{equation}

Thus for every fixed~$\omega_0$ from $\partial T_{q+1}$ we have established a correspondence $r\leftrightarrow\alpha$ between the measures $\lambda_{\omega_0,r}$, more exactly, the parameters~$r\in(0,1)$, and the eigenfunctions $q^{-\alpha\hor_{\omega_0}(v)}$, i.\,e., the parameters~$\alpha\in\R$.
Since formula~\eqref{eq:p_mu} gives a bijection between the minimal (normalized positive) eigenfunctions and the ergodic (central Markov) measures, it follows from Proposition~\ref{pro:main-3} that the eigenfunction $q^{-\alpha\hor_{\omega}(v)}$ is minimal if and only if $\alpha\in[1/2,+\infty)$.

Now we can describe our phase transition not only in terms of ergodicity, but also in terms of eigenfunctions. As the exponent $\alpha$ passes through $\frac12$, the power $q^{-\alpha\hor(v)}=(q^{-\hor(v)})^\alpha$ of the harmonic function $q^{-\hor(v)}$ remains an eigenfunction of the Laplace operator, but ceases to be minimal, since only minimal measures correspond to ergodic measures. This interpretation is closer to standard models of phase transitions, since the parameter~$\alpha$   now is merely an exponent. The authors do not know whether such natural algebraic models of phase transition appeared earlier.

\medskip

In conclusion we would like to discuss another corollary concerning the Martin boundary, which follows from the properties of our example. We have mentioned the interpretation of the classical construction of the Martin boundary in terms of  branching graphs (Bratteli diagrams), see~\cite{Ker96, KOO, V3}. In~\cite{V3}  it is explained how the Martin boundary can be defined geometrically in the framework of the theory of projective limits of finite-dimensional simplices. The Martin boundary is embedded into the simplex of central Markov measures, and its \emph{minimal part} is identified with the set of extreme points of the simplex (the exit boundary).

In the case of the graph $\Damma(T_{q+1})$, its Martin boundary $\mathcal{M}=\mathcal{M}(\Damma(T_{q+1}))$ contains the exit boundary $\mathcal{M}_0=\Erg(\Damma(T_{q+1}))$ as a proper subset: the complementary set
 $\mathcal{M}'=\mathcal{M}\ssm \mathcal{M}_0$ is formed by the countable set of isolated points whose elements are in a one-to-one correspondence with the vertices of the tree~$T_{q+1}$.
This correspondence $\sigma\colon T_{q+1}\to \mathcal{M}'$ can be extended to a continuous embedding $T_{q+1}\cup\partial T_{q+1}\to \mathcal{M}$ which sends the boundary $\partial T_{q+1}$ to  $\partial T_{q+1}\times\{1/2\}$. The point $\sigma(v)\in \mathcal{M}'$ corresponding to a vertex $v\in T_{q+1}$  is a limiting point for the set $\pi^{-1}(v)$ in $\Damma(T_{q+1})$.

Note that finding the Martin boundary involves direct calculations with powers of convolutions or enumeration of walks of given length between vertices. The number of walks of length $n$ connecting two vertices at distance~$k\in\N_0$ in the $(q+1)$-homogeneous tree~$T_{q+1}$ for $n\ge k$ and even $n-k$ is given by the formula (see also \cite{LM71, Gri77, Pag93})
$$
S_q(n,k) ~=~ S_q(n,k+2) + q^{(n-k)/2}L(n,k) ~= \sum_{t\in\{k,k+2,\dots,n\}}q^{(n-t)/2}L(n,t),
$$
где
$$
L(n,t)~=~C_n^{(n-t)/2}-C_n^{(n-t)/2-1}
~=~\frac{t+1}{n+1}C_{n+1}^{(n-t)/2}.
$$

The example with the Martin boundary of the graph $\Damma(T_{q+1})$ gives an answer (as expected, the negative one) to the question posed in~\cite{V3}: can one describe the Martin boundary in terms of the limiting simplex itself? Indeed, if
$$
\Sigma_0 \xleftarrow{\pi_{1,0}} \Sigma_1 \xleftarrow{\pi_{2,1}}\Sigma_2 \xleftarrow{} \dots \xleftarrow{} \Sigma_{n} \xleftarrow{\pi_{n+1,n}}\Sigma_{n+1}\xleftarrow{} \dots \Sigma_{\infty}=\Inv(\Damma(T_{q+1}))
$$
is the projective limit corresponding to the graph $\Damma(T_{q+1})$, then for the rarified approximation
$$
\Sigma_0 \xleftarrow{\pi_{1,0}\circ \pi_{2,1}} \Sigma_2 \xleftarrow{\pi_{3,2}\circ \pi_{4,3}}\Sigma_4 \xleftarrow {} \dots \Sigma_{\infty}=\Inv(\Damma(T_{q+1})),
$$
which has the same limiting simplex, the Martin boundary no longer contains points of the set $\mathcal{M}'$ that correspond to points of $T_{q+1}$ lying at an odd distance from $v_0$, since all simplices containing appropriate vertices are absent in the rarified sequence.

Thus the notion of the Martin boundary in the problem of describing the invariant measures on a given equivalence relation depends on an approximation of this relation and in this sense is not intrinsic for the problem.

\smallskip
Translated by N.~V.~Tsilevich.

\end{document}